\documentclass[11pt,reqno]{amsart}

\usepackage[T1]{fontenc}
\usepackage{lmodern}
\usepackage{microtype}
\usepackage[margin=1.03in]{geometry}
\usepackage{amsmath,amssymb,amsfonts,mathtools}
\usepackage{amsthm}
\usepackage{booktabs,tabularx,float}
\usepackage{enumitem}
\usepackage{xcolor}
\usepackage[colorlinks=true,
  linkcolor=blue!50!black,
  citecolor=blue!50!black,
  urlcolor=blue!50!black]{hyperref}
\usepackage[capitalize,noabbrev]{cleveref}

\allowdisplaybreaks
\numberwithin{equation}{section}
\setlist[enumerate]{leftmargin=2.2em,itemsep=0.25em,topsep=0.4em}
\setlist[itemize]{leftmargin=2em,itemsep=0.25em,topsep=0.4em}

\theoremstyle{plain}
\newtheorem{theorem}{Theorem}[section]
\newtheorem{proposition}[theorem]{Proposition}
\newtheorem{lemma}[theorem]{Lemma}
\newtheorem{corollary}[theorem]{Corollary}

\theoremstyle{definition}

\theoremstyle{remark}
\newtheorem{remark}[theorem]{Remark}

\newcommand{\Int}{\operatorname{Int}}
\newcommand{\Frac}{\operatorname{Frac}}
\newcommand{\F}{\mathbb F}
\newcommand{\m}{\mathfrak m}

\title[WPC Does Not Imply APC]{Weak Polynomial Completeness Does Not Imply \\ Almost Polynomial Completeness}

\author{Viet-Hoang Tran}
\address{Department of Mathematics, National University of Singapore, Singapore 119076}
\email{hoang.tranviet@u.nus.edu}
\urladdr{https://vh-tran.github.io/}

\author{Tan M. Nguyen}
\address{Department of Mathematics, National University of Singapore, Singapore 119076}
\email{tanmn@nus.edu.sg}
\urladdr{https://tanmnguyen89.github.io/}

\date{}

\subjclass[2020]{Primary 13F20; Secondary 13B25}
\keywords{integer-valued polynomial, weak polynomial completeness, almost polynomial completeness}

\hypersetup{
  pdftitle={Weak Polynomial Completeness Does Not Imply Almost Polynomial Completeness},
  pdfauthor={Viet-Hoang Tran}
}

\begin{document}
\raggedbottom

\begin{abstract}
We show that weak polynomial completeness does not imply almost
polynomial completeness by constructing an explicit extension of
domains \(D\subseteq A\) such that
\[
 \Int(D)\subseteq\Int(A)
 \qquad\text{but}\qquad
 \Int(D^2)\nsubseteq\Int(A^2).
\]
\end{abstract}

\maketitle

\section{Introduction}

Let \(R\) be a domain with quotient field \(Q\).  For \(n\geq 1\), its
ring of integer-valued polynomials in \(n\) variables is
\[
 \Int(R^n)
   =\{F\in Q[Z_1,\ldots,Z_n]:F(R^n)\subseteq R\}.
\]
Let \(R\subseteq B\) be an extension of domains and put
\[
 \Int(R,B)
   =\{f\in\Frac(B)[Z]:f(R)\subseteq B\}.
\]
The extension is \emph{polynomially complete} (PC) if \(R\) is
polynomially dense in \(B\), meaning that
\[
 \Int(R,B)=\Int(B).
\]
It is \emph{almost polynomially complete} (APC) if
\[
 \Int(R^n)\subseteq\Int(B^n)
 \qquad\text{for every }n\geq1,
\]
where the inclusion is taken through
\(Q\subseteq\Frac(B)\), and it is
\emph{weakly polynomially complete} (WPC) if
\[
 \Int(R)\subseteq\Int(B).
\]
Thus PC measures polynomial density, APC tests integer-valued
polynomials in every finite number of variables, and WPC tests only
one variable. Table~\ref{tab:hierarchy} summarizes the relationships among these three notions.

\begin{table}[H]
\centering
\footnotesize
\renewcommand{\arraystretch}{1.10}
\begin{tabularx}{0.82\textwidth}{@{}lX@{}}
\toprule
Direction & Status and reason\\
\midrule
\(\mathrm{PC}\Rightarrow\mathrm{APC}\)
& Proved by successive one-variable specialization
  \cite[Section~7]{Elliott2007}.\\
\(\mathrm{APC}\Rightarrow\mathrm{WPC}\)
& Immediate from the definition of APC by taking
  \(n=1\).\\
\(\mathrm{APC}\nRightarrow\mathrm{PC}\)
& The extension
  \(\mathbb Z[T]\subseteq\mathbb Z[T/2]\)
  is APC but not PC \cite[p.~231]{Elliott2009}.\\
\(\mathrm{WPC}\nRightarrow\mathrm{APC}\)
& Formerly open; \cref{thm:main}
  constructs a WPC extension that is not APC.\\
\bottomrule
\end{tabularx}
\caption{Known implications, counterexamples, and the contribution of
this work.}
\label{tab:hierarchy}
\end{table}

These conditions satisfy
\(\mathrm{PC}\Longrightarrow\mathrm{APC}\Longrightarrow\mathrm{WPC}\).
The second implication is the case \(n=1\).  For the first, let
\(R\subseteq B\) be PC and fix \(F\in\Int(R^n)\).  Inductively suppose
that the first \(k-1\) variables may be specialized arbitrarily in
\(B\), while the remaining variables range over \(R\).  After fixing
all variables except the \(k\)-th, the resulting
\(h\in\Frac(B)[Z]\) satisfies \(h(R)\subseteq B\).  Hence
\(h\in\Int(R,B)=\Int(B)\), so the \(k\)-th variable may also be
specialized in \(B\).  The induction starts from
\(F(R^n)\subseteq R\subseteq B\) and ends with
\(F(B^n)\subseteq B\), proving PC \(\Rightarrow\) APC
\cite[Section~7]{Elliott2007}.

The study of integer-valued polynomial rings traces back to the
classical work of Pólya and Ostrowski and has since developed into a
substantial part of commutative algebra \cite{CahenChabert1997}.
Elliott organized polynomial density and the one- and several-variable
conditions into the hierarchy summarized in \cref{tab:hierarchy}
\cite{Elliott2007}.  The converse WPC \(\Rightarrow\) APC remained
open: Elliott recorded that
no counterexample was known and later wrote that he suspected the
converse to fail \cite[p.~232]{Elliott2009}
\cite[Section~6]{Elliott2013}.  This remaining implication was listed
as Problem~22 in \emph{Open Problems in Commutative Ring Theory}
\cite{CahenFontanaFrischGlaz2014}.

The free WPC extension makes the unresolved direction precise.  Let
\(\Int_w(R^n)\) denote the smallest \(R\)-subalgebra of
\(\Int(R^n)\) that contains \(R[Z_1,\ldots,Z_n]\) and is closed under
the unary substitutions
\[
 G\longmapsto f(G),\qquad f\in\Int(R).
\]
Here \(\Int_w(R^n)\) is the free WPC extension on \(n\) generators,
whereas \(\Int(R^n)\) is the free APC extension.  Consequently, every
WPC extension of \(R\) is APC if and only if
\[
 \Int_w(R^n)=\Int(R^n)
 \qquad\text{for every }n\geq1.
\]
This is Elliott's free-object criterion
\cite[p.~232, Proposition~5.2]{Elliott2009}.  For comparison, the
canonical multiplication map \(\theta_n\) gives
\begin{equation}
 \begin{aligned}
 \theta_n:\Int(R)^{\otimes_R n}&\longrightarrow\Int(R^n),&
 f_1\otimes\cdots\otimes f_n&\longmapsto\prod_{i=1}^n f_i(Z_i),\\
 \Int_{\otimes}(R^n):=\operatorname{im}(\theta_n)
 &\subseteq\Int_w(R^n)\subseteq\Int(R^n).&&
 \end{aligned}
 \label{eq:three-closures}
\end{equation}
Surjectivity of every \(\theta_n\) proves WPC \(\Rightarrow\) APC and
yields many known positive cases \cite{Elliott2007,Elliott2009}.
Recent counterexamples concern only the first inclusion in
\eqref{eq:three-closures}; their witnesses already lie in
\(\Int_w(R^2)\) \cite{PengTaoWangYu2026,JiangEtAl2026}.  They therefore
did not decide whether the second inclusion can be strict.

Our obstruction uses characteristic \(2\): unary substitution preserves
the \(V\)-integrality of a mixed derivative, while a suitable binary
integer-valued polynomial has a mixed derivative with a forbidden
denominator.

\begin{theorem}\label{thm:main}
Let \(V\) be a discrete valuation domain of characteristic \(2\), let
\(\pi\) be a uniformizer of \(V\), and suppose that the residue field
\(L=V/\pi V\) is infinite.  Set
\[
 D=\F _2+\pi V
   =\{x\in V:x\bmod\pi V\in\F _2\}.
\]
There exists a domain \(A\) with \(D\subseteq A\) such that
\[
 \Int(D)\subseteq\Int(A)
 \quad\text{but}\quad
 \Int(D^n)\nsubseteq\Int(A^n)
 \quad\text{for every }n\geq2.
\]
One may take \(V=\F _2(s)[t]_{(t)}\), \(\pi=t\), and
\(D=\F _2+t\F _2(s)[t]_{(t)}\).
\end{theorem}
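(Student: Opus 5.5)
The plan is to use Elliott's free object directly. Take
\[
 A=\Int_w(D^2)\subseteq\Int(D^2)\subseteq Q[Z_1,Z_2],
\]
where \(Q=\Frac(V)\). It is a domain containing \(D\). It is WPC because, for \(f\in\Int(D)\) and \(G\in A\), the element \(f(G)\) lies in \(A\) by the definition of \(\Int_w\); hence \(f(A)\subseteq A\), so \(f\in\Int(A)\). To get failure of APC, it suffices to find \(F\in\Int(D^2)\setminus\Int_w(D^2)\). Then \(Z_1,Z_2\in A\) but \(F(Z_1,Z_2)=F\notin A\), so \(F\notin\Int(A^2)\). For \(n\ge2\), regard \(F\) as a polynomial in \(n\) variables that does not involve \(Z_3,\dots,Z_n\); the same evaluation witnesses \(\Int(D^n)\nsubseteq\Int(A^n)\).

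\textbf{The invariant.} Let \(\mathcal S\) be the set of \(G\in\Int(D^2)\) such that \(\partial_1G\), \(\partial_2G\) and \(\partial_1\partial_2G\) all map \(D^2\) into \(V\). I will show \(\mathcal S\) is an \(R\)-subalgebra containing \(D[Z_1,Z_2]\) and closed under \(G\mapsto f(G)\) for \(f\in\Int(D)\); then \(\Int_w(D^2)\subseteq\mathcal S\).
\begin{itemize}
\item Polynomials with coefficients in \(D\) clearly lie in \(\mathcal S\).
\item Sums are immediate. Products follow from the Leibniz formula
\[
 \partial_{12}(GH)=G\,\partial_{12}H+H\,\partial_{12}G+\partial_1G\,\partial_2H+\partial_2G\,\partial_1H .
\]
Every term is \(V\)-valued on \(D^2\), because \(G\) and \(H\) take values in \(D\subseteq V\).
\item For \(f(G)\), the chain rule gives \(\partial_i f(G)=f'(G)\,\partial_iG\) and
\[
 \partial_{12}f(G)=f'(G)\,\partial_{12}G+f''(G)\,\partial_1G\,\partial_2G .
\]
In characteristic \(2\), \(f''\) is identically zero, since the coefficient of \(Z^k\) in \(f''\) is \(k(k-1)a_k\) and \(k(k-1)\) is even. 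So the only input needed is \(f'(D)\subseteq V\), together with \(G(D^2)\subseteq D\).
\end{itemize}

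\textbf{Key lemma: \(f'(D)\subseteq V\) for \(f\in\Int(D)\).} I expect this to be the main point. Fix \(x\in D\) and set
\[
 g(y)=\frac{f(x+\pi y)-f(x)}{\pi}.
\]
For \(y\in V\), both \(f(x+\pi y)\) and \(f(x)\) lie in \(D\). Their residues lie in \(\mathbb F_2\) and agree, because \(f\) has coefficients in \(Q\) and I expect one must check that \(f(x+\pi y)\equiv f(x)\) modulo \(\pi V\). This follows if \(f\) is \(V\)-valued near \(x\) in a continuous way; the cleaner route is to note that \(y\mapsto f(x+\pi y)\) is a \(V\)-valued polynomial on \(V\).

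Since \(L\) is infinite, \(\Int(V)=V[y]\). Hence \(f(x+\pi y)\in V[y]\), and its reduction modulo \(\pi\) is a polynomial over \(L\) taking values in \(\mathbb F_2\) on all of \(L\). An infinite field forces such a polynomial to be constant. Therefore \(g\in V[y]\), and \(f'(x)=g'(0)\in V\).

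This residue-constancy argument is exactly where both hypotheses are used: the infinite residue field and the definition \(D=\mathbb F_2+\pi V\).

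\textbf{The witness.} Take
\[
 F=\frac{(Z_1^2-Z_1)(Z_2^2-Z_2)}{\pi}.
\]
For \(x=a+\pi v\) with \(a\in\mathbb F_2\), we have \(x^2-x=\pi(\pi v^2-v)\in\pi V\). So \(F(D^2)\subseteq\pi V\subseteq D\), and \(F\in\Int(D^2)\).

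In characteristic \(2\), \(\partial_1F=(Z_2^2-Z_2)/\pi\) is \(V\)-valued on \(D\). However,
\[
 \partial_1\partial_2F=\frac{(2Z_1-1)(2Z_2-1)}{\pi}=\frac1\pi\notin V .
\]
Hence \(F\notin\mathcal S\supseteq\Int_w(D^2)\), which completes the argument. The explicit choice \(V=\mathbb F_2(s)[t]_{(t)}\) has residue field \(\mathbb F_2(s)\), which is infinite.
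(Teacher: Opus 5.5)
Your proposal is correct and follows essentially the same route as the paper. It uses the same $A=\Int_w(D^2)$, the same witness $(Z_1^2+Z_1)(Z_2^2+Z_2)/\pi$, and the same key lemma $f'(D)\subseteq V$, obtained from $\Int(V)=V[T]$ and the residue-constancy of $f(x+\pi T)$. It also uses the same mixed-derivative invariant, where $f''=0$ in characteristic $2$. The only difference is cosmetic: you require the three derivatives to be $V$-valued on all of $D^2$, whereas the paper evaluates them only at the origin, and both versions are closed under the unary operations for the same reason.
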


We take \(A=\Int_w(D^2)\), with variables denoted by \(U,W\), and prove
\[
 \frac{(U^2+U)(W^2+W)}{\pi}
\]
belongs to \(\Int(D^2)\setminus A\).  Thus
\[
 \Int_w(D^2)\subsetneq\Int(D^2),
\]
which is precisely the second separation in \eqref{eq:three-closures}
that the earlier tensor counterexamples did not provide.  In
particular, \(D\subseteq A\) is WPC but not APC, so
\cref{thm:main} answers the remaining converse negatively.  Combined
with the APC-but-not-PC example in \cref{tab:hierarchy}, it shows that
neither implication in the displayed hierarchy is reversible.  The present
\(D\) is non-Noetherian, so
the corresponding question for Noetherian base domains or under other
finiteness hypotheses is not decided here; see \cref{rem:scope}.

\section{The pullback domain and a unary derivative lemma}
\label{sec:pullback}

Throughout the proof, \(V\), \(\pi\), \(L\), and \(D\) have the meaning
fixed in \cref{thm:main}, and
\[
 K=\Frac(V).
\]
We regard \(\F _2\) as the prime subfield of \(L\).

\begin{lemma}\label{lem:D-structure}
The ring \(D\) is a local one-dimensional domain with maximal ideal
\(\m=\pi V\), residue field \(D/\m\cong\F _2\), and quotient field
\(\Frac(D)=K\).
\end{lemma}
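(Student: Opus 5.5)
The plan is to read all the claims off the reduction map \(\rho:V\to L=V/\pi V\), using the basic facts that \(V\) is a local domain with maximal ideal \(\pi V\), that every nonzero element of \(K\) has the form \(u\pi^k\) with \(u\in V^\times\) and \(k\in\mathbb Z\), and that \(\pi V\neq 0\).

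\emph{Ring structure and maximal ideal.} First, \(D=\rho^{-1}(\F_2)\) is the preimage of the subring \(\F_2\subseteq L\), so \(D\) is a subring of \(V\) and hence a domain. Next, \(\m=\pi V=\ker(\rho|_D)\) is an ideal of \(D\), and \(D/\m\cong\F_2\) because \(\rho|_D\) maps onto \(\F_2\). Since \(\F_2\) is a field, \(\m\) is maximal. For locality, take \(x\in D\setminus\m\). Then \(\rho(x)=1\), so \(x\) is a unit of \(V\). Moreover \(\rho(x^{-1})=\rho(x)^{-1}=1\in\F_2\), so \(x^{-1}\in D\). Thus every element outside \(\m\) is a unit of \(D\), and \(D\) is local with maximal ideal \(\m\).

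\emph{Quotient field.} Clearly \(\Frac(D)\subseteq K\). Conversely, \(\pi^2/\pi=\pi\) shows \(\pi\in\Frac(D)\). If \(u\in V^\times\), then \(\pi u\in\m\subseteq D\), so \(u=(\pi u)/\pi\in\Frac(D)\). Together with \(\pi^{-1}=\pi/\pi^2\), this shows every \(u\pi^k\) lies in \(\Frac(D)\), hence \(\Frac(D)=K\).

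\emph{Dimension.} This is the only step needing care, since \(D\) is not Noetherian when \(L\) is infinite. We must show that \(0\) and \(\m\) are the only primes of \(D\). Let \(P\neq0\) be a prime of \(D\) and pick \(0\neq a\in P\). Since \(D\) is local, \(P\subseteq\m\), so \(a=u\pi^k\) with \(k\ge1\). Now take any \(y=v\pi\in\m\) with \(v\in V\). Then \(y^{k}=v^k\pi^k=(v^ku^{-1})\,a\). Also \(y\cdot y^{k}=(y^{k+1}u^{-1}\pi^{-k})\,a\), and the coefficient \(y^{k+1}u^{-1}\pi^{-k}=v^{k+1}u^{-1}\pi\) lies in \(\pi V\subseteq D\). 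Hence \(y^{k+1}\in aD\subseteq P\), and primality gives \(y\in P\). Therefore \(P=\m\), so \(\operatorname{Spec}D=\{0,\m\}\) and \(\dim D=1\).

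The trick in the dimension step is to raise \(y\) to one extra power. This pushes the quotient \(y^{k+1}/a\) into \(\pi V\subseteq D\), rather than merely into \(V\), and so avoids any integrality argument. Alternatively, one could note that \(V\) is integral over \(D\) exactly when \(L\) is algebraic over \(\F_2\), but that fails here, which is why the direct valuation argument is used.
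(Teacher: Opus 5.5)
Your proof is correct and is essentially the paper's argument. The residue map gives locality and \(D/\m\cong\F_2\), and the identity \(v=\pi v/\pi\) gives \(\Frac(D)=K\). For the primes you use the same device of spending one extra factor of \(\pi\) so that the cofactor lands in \(\pi V\subseteq D\); you compress it into the single relation \(y^{k+1}=a\cdot(v^{k+1}u^{-1}\pi)\), whereas the paper first derives \(\pi^{r+1}=a(\pi u^{-1})\in P\), hence \(\pi\in P\), and then uses \(z^2=\pi(\pi v^2)\).

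Your closing aside holds only for the explicit instance. The hypotheses also allow an algebraic residue field such as \(L=\overline{\F}_2\), and then \(V\) is integral over \(D\). Nothing in your proof depends on this remark.
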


\begin{proof}
The residue map \(V\to L\) shows immediately that \(D\) is a subring of
the domain \(V\), that \(\pi V\) is an ideal of \(D\), and that
\(D/\pi V\cong\F _2\).  If \(x\in D\setminus\pi V\), then its
residue is \(1\), so \(x=1+y\) for some \(y\in\pi V\).  It is a unit
of \(V\), and
\[
 x^{-1}-1=-\frac{y}{x}\in\pi V.
\]
Hence \(x^{-1}\in 1+\pi V\subseteq D\).  Every element outside
\(\pi V\) is therefore a unit of \(D\), and \(\m=\pi V\) is the unique
maximal ideal.

We next determine the nonzero prime ideals.  Let \(P\) be a nonzero
proper prime ideal of \(D\), and choose \(0\ne a\in P\).  Write
\(a=\pi^r u\), where \(r\geq0\) and \(u\in V^\times\).  The case
\(r=0\) would make \(a\) a unit of \(D\), by the preceding paragraph,
so \(r\geq1\).  Since \(\pi u^{-1}\in\pi V\subseteq D\), we have
\[
 \pi^{r+1}=a(\pi u^{-1})\in P.
\]
Primality gives \(\pi\in P\).  For an arbitrary
\(z=\pi v\in\m\), one has
\[
 z^2=\pi(\pi v^2)\in P,
\]
because \(\pi v^2\in\pi V\subseteq D\).  Thus \(z\in P\), so
\(\m\subseteq P\).  Therefore \(P=\m\).  The only prime ideals are
\((0)\) and \(\m\), and \(\dim D=1\).

Finally, \(\pi\in D\) and \(\pi v\in D\) for every \(v\in V\).
Consequently
\[
 v=\frac{\pi v}{\pi}\in\Frac(D),
\]
so \(V\subseteq\Frac(D)\).  It follows that
\(K=\Frac(V)\subseteq\Frac(D)\).  The reverse inclusion follows from
\(D\subseteq V\subseteq K\), proving \(\Frac(D)=K\).
\end{proof}

The infinitude of the residue field has the following standard
consequence.  We include the argument because it is the precise point
at which the hypothesis on \(L\) enters.

\begin{lemma}\label{lem:IntV}
If \(V\) is a discrete valuation domain with infinite residue field,
then
\[
 \Int(V)=V[Z].
\]
\end{lemma}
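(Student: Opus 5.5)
The inclusion \(V[Z]\subseteq\Int(V)\) is trivial, so the content is the reverse inclusion. We show that a polynomial \(f\in K[Z]\) with \(f(V)\subseteq V\) already has coefficients in \(V\). The plan is a standard denominator-clearing argument combined with reduction modulo \(\pi\).

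Let \(f\in\Int(V)\) be nonzero. Since \(V\) is a discrete valuation ring, there is a smallest integer \(k\geq0\) with \(\pi^k f\in V[Z]\); put \(g=\pi^k f\). It suffices to show \(k=0\), so assume \(k\geq1\) for contradiction. By minimality of \(k\), at least one coefficient of \(g\) is a unit of \(V\). Hence the reduction \(\bar g\in L[Z]\) is a nonzero polynomial, of degree at most \(\deg f\).

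On the other hand, for every \(a\in V\) we have \(g(a)=\pi^k f(a)\in\pi^k V\subseteq\pi V\), because \(f(a)\in V\) and \(k\geq1\). Reducing modulo \(\pi\) gives \(\bar g(\bar a)=0\) for all \(\bar a\in L\), since \(V\to L\) is surjective. Thus the nonzero polynomial \(\bar g\) over the field \(L\) has infinitely many roots, which is impossible because \(L\) is infinite. This contradiction forces \(k=0\), so \(f\in V[Z]\).

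There is no serious obstacle. The one point to state carefully is the existence of the minimal \(k\): the coefficients of \(f\) lie in \(K=\Frac(V)\), each has a valuation, so a finite power of \(\pi\) clears all denominators. Minimality then means precisely that some coefficient of \(g\) has valuation zero. This step is exactly where the infinitude of \(L\) is used, which is the role the paper assigns to this lemma.
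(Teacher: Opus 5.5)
Your proof is correct and is essentially the paper's argument: the paper clears denominators with $\pi^r$, where $r=-\min v(b_i)$ is exactly your minimal $k$, so that some coefficient becomes a unit, and then observes that the nonzero reduction in $L[Z]$ vanishes on the infinite field $L$. One small wording slip: the infinitude of $L$ is used only in the final root-counting step, not in the existence of the minimal $k$, which holds in any discrete valuation domain.
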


\begin{proof}
The inclusion \(V[Z]\subseteq\Int(V)\) is immediate.  Conversely, let
\(H=\sum_{i=0}^d b_iZ^i\in K[Z]\) satisfy \(H(V)\subseteq V\).
Assume that some coefficient is not in \(V\).  With \(v\) denoting the
normalized valuation of \(V\), put
\[
 r=-\min_{b_i\ne0}v(b_i)>0.
\]
Then \(G=\pi^rH\in V[Z]\), and at least one coefficient of \(G\) is a
unit.  Thus its reduction \(\overline G\in L[Z]\) is nonzero.
Nevertheless, for every \(x\in V\),
\[
 G(x)=\pi^rH(x)\in\pi^rV\subseteq\pi V.
\]
Every element of \(L\) has a lift in \(V\), so \(\overline G\) vanishes
at every element of the infinite field \(L\).  A nonzero polynomial
over a field has at most its degree many roots, a contradiction.
Hence every \(b_i\in V\), and \(H\in V[Z]\).
\end{proof}

\begin{lemma}[Unary derivative lemma]\label{lem:derivative}
For every \(f\in\Int(D)\) and every \(a\in D\), one has
\[
 f'(a)\in V.
\]
\end{lemma}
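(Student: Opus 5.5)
The approach is to rescale around \(a\) so that \(f\) becomes a polynomial on all of \(V\). Then \cref{lem:IntV} makes its coefficients integral, and the extra information that the values land in \(D\) rather than just \(V\) gains the missing factor of \(\pi\).

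Fix \(f\in\Int(D)\subseteq K[Z]\) and \(a\in D\). Put
\[
 g(Z)=f(a+\pi Z)\in K[Z],\qquad f(a+X)=\sum_{k\ge0}c_kX^k,\quad c_1=f'(a),
\]
so that \(g(Z)=\sum_k c_k\pi^kZ^k\). For every \(y\in V\) we have \(a+\pi y\in D\), because \(\pi V\subseteq D\). Hence \(g(V)\subseteq f(D)\subseteq D\subseteq V\). By \cref{lem:IntV}, \(g\in V[Z]\), so \(c_k\pi^k\in V\) for all \(k\). In particular \(\pi f'(a)\in V\). This alone only gives \(f'(a)\in\pi^{-1}V\), so one more step is needed.

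The key step uses the finer fact that \(g\) actually maps \(V\) into \(D=\F_2+\pi V\). Reduce \(g\) modulo \(\pi\) to get \(\overline g\in L[Z]\). Since every element of \(L\) lifts to \(V\), the function \(\overline g\) takes values in \(\F_2\) on all of \(L\). Therefore \(\overline g^{\,2}-\overline g\) vanishes on the infinite field \(L\), and so \(\overline g^{\,2}=\overline g\) in \(L[Z]\). If \(\deg\overline g=d\ge1\), then \(\deg\overline g^{\,2}=2d>d\), which is impossible. Hence \(\overline g\) is constant.

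It follows that the linear coefficient \(\pi c_1\) of \(g\) lies in \(\pi V\), which gives \(c_1=f'(a)\in V\). This second step, upgrading from "\(V\)-valued" to "\(\F_2+\pi V\)-valued" to kill the whole nonconstant part of \(\overline g\), is the only real point. It is also where the infinitude of \(L\) is used a second time.
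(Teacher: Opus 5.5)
Your proposal is correct and follows essentially the same route as the paper: substitute $a+\pi Z$, apply \cref{lem:IntV} to get integral coefficients, then use that the reduction takes values in $\F_2$ on the infinite field $L$ to force it to be constant, which puts the linear coefficient $\pi f'(a)$ in $\pi V$. The only cosmetic difference is the last step. You conclude that $\overline g$ is constant by comparing degrees in $\overline g^{\,2}=\overline g$, whereas the paper factors $\overline H_a(\overline H_a+1)=0$ in the domain $L[T]$; the two arguments are equivalent.
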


\begin{proof}
Fix \(f\in\Int(D)\) and \(a\in D\), and introduce the polynomial
\[
 H_a(T)=f(a+\pi T)\in K[T].
\]
For every \(t\in V\), the element \(a+\pi t\) lies in \(D\); hence
\[
 H_a(t)=f(a+\pi t)\in D\subseteq V.
\]
Thus \(H_a\in\Int(V)\), and \cref{lem:IntV} gives
\(H_a\in V[T]\).

Let \(\overline H_a\in L[T]\) denote its coefficientwise reduction.
For each \(\lambda\in L\), choose a lift \(t\in V\).  Since
\(H_a(t)\in D\), its residue lies in the distinguished subfield
\(\F _2\subseteq L\).  Therefore
\[
 \overline H_a(\lambda)^2+\overline H_a(\lambda)=0
 \qquad(\lambda\in L).
\]
The polynomial \(\overline H_a^2+\overline H_a\) vanishes on the
infinite field \(L\), so it is the zero polynomial.  Since \(L[T]\) is
an integral domain,
\[
 \overline H_a(\overline H_a+1)=0
 \quad\Longrightarrow\quad
 \overline H_a=0\ \text{or}\ \overline H_a=1.
\]
In particular, every positive-degree coefficient of \(H_a\) belongs to
\(\pi V\).

The coefficient of \(T\) in \(H_a(T)=f(a+\pi T)\) is
\(\pi f'(a)\).  It follows that \(\pi f'(a)\in\pi V\).  Thus
\(\pi f'(a)=\pi v\) for some \(v\in V\), and cancellation of the
nonzero element \(\pi\) in the field \(K\) yields \(f'(a)=v\in V\).
\end{proof}

\begin{remark}\label{rem:constant-reduction}
The proof actually shows more: for every \(f\in\Int(D)\) and
\(a\in D\), the reduction of \(f(a+\pi T)\) modulo \(\pi\) is a
constant in \(\F _2\).  We shall only need the resulting control of the
first derivative.
\end{remark}

\section{Unary closure and the mixed-derivative invariant}
\label{sec:closure}

Let \(U,W\) be algebraically independent over \(K\), and set
\[
 C=\Int(D^2)\subseteq K[U,W].
\]
We recursively define \(D\)-subalgebras of \(C\).  Put
\(A_0=D[U,W]\), and, once \(A_r\) is defined, let
\begin{equation}
 A_{r+1}
  =D\big[A_r,\ f(G):f\in\Int(D),\ G\in A_r\big]\subseteq C.
 \label{eq:Ar}
\end{equation}
Every polynomial in \(D[U,W]\) maps \(D^2\) into \(D\), so
\(A_0\subseteq C\).  The displayed containment is also legitimate.
Indeed, if \(G\in C\) and
\((x,y)\in D^2\), then \(G(x,y)\in D\); consequently
\(f(G(x,y))\in D\) for every \(f\in\Int(D)\).  Thus \(f(G)\in C\),
and induction proves that every \(A_r\) is contained in \(C\).  Define
\begin{equation}
 A=\bigcup_{r\geq0}A_r.
 \label{eq:A}
\end{equation}
The sequence is ascending, so \(A\) is a \(D\)-subalgebra of
\(K[U,W]\).

\begin{proposition}\label{prop:WPC}
The ring \(A\) is a domain,
\[
 \Frac(A)=K(U,W),
\]
and the extension \(D\subseteq A\) is WPC.
\end{proposition}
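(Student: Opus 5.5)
The three assertions are of different difficulty, and I would prove them in the order stated.

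\emph{Domain and quotient field.} That \(A\) is a domain is immediate, since \(A\) is a subring of the domain \(K[U,W]\). For the quotient field, note first that \(D[U,W]=A_0\subseteq A\subseteq K[U,W]\). By \cref{lem:D-structure}, \(\Frac(D)=K\), so \(\Frac(A)\) contains \(K\) together with \(U\) and \(W\). Hence \(K(U,W)\subseteq\Frac(A)\). Conversely, \(A\subseteq K[U,W]\) gives \(\Frac(A)\subseteq K(U,W)\). Since \(\Frac(A)=K(U,W)\) contains \(K=\Frac(D)\), the inclusion \(\Int(D)\subseteq\Int(A)\) makes sense inside \(K(U,W)[Z]\).

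\emph{WPC.} Fix \(f\in\Int(D)\subseteq K[Z]\) and \(G\in A\); we must show \(f(G)\in A\). Since the sequence \((A_r)\) is ascending and \(A\) is its union, \(G\in A_r\) for some \(r\). By the definition \eqref{eq:Ar}, \(f(G)\in A_{r+1}\subseteq A\). Hence \(f(A)\subseteq A\), that is, \(f\in\Int(A)\).

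\emph{Subtleties.} Two points need checking, and the first is the only step requiring any thought. First, \(\Int(A)\) is defined using coefficients in \(\Frac(A)\). We have just identified \(\Frac(A)\) with \(K(U,W)\), so \(f\), viewed there, is evaluated at \(G\) exactly as the substitution \(f(G)\in K[U,W]\) used in \eqref{eq:Ar}. Second, the ascending property is clear, because \(A_r\subseteq A_{r+1}\) by construction. It follows that the union is a ring, and that any finite set of elements of \(A\) lies in a single \(A_r\).
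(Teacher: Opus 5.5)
Your proposal is correct and follows the paper's proof essentially verbatim. \(A\) is a domain as a subring of \(K[U,W]\), and \(\Frac(A)=K(U,W)\) follows by sandwiching \(A\) between \(D[U,W]\) and \(K[U,W]\) using \(\Frac(D)=K\). WPC holds because any \(G\in A\) lies in some \(A_r\), whence \(f(G)\in A_{r+1}\subseteq A\).
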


\begin{proof}
Because \(A\subseteq K[U,W]\), it is a domain.  The inclusions
\[
 D[U,W]\subseteq A\subseteq K[U,W]
\]
imply, after taking quotient fields,
\[
 K(U,W)=\Frac(D[U,W])
 \subseteq\Frac(A)\subseteq K(U,W).
\]
Here \(\Frac(D)=K\) by \cref{lem:D-structure}.  Hence
\(\Frac(A)=K(U,W)\).

Let \(f\in\Int(D)\) and \(G\in A\).  There is some \(r\) with
\(G\in A_r\), and then the definition \eqref{eq:Ar} gives
\(f(G)\in A_{r+1}\subseteq A\).  Therefore \(f(A)\subseteq A\).
Since \(K\subseteq K(U,W)=\Frac(A)\), we may regard \(f\) as an
element of \(\Frac(A)[Z]\), and the preceding inclusion says exactly
that \(f\in\Int(A)\).  Hence
\(\Int(D)\subseteq\Int(A)\).
\end{proof}

\begin{remark}\label{rem:free}
The iteration is the concrete unary-closure construction underlying
free WPC algebras in Elliott's approach
\cite{Elliott2009,Elliott2010}.  More precisely, \(A\) is the smallest
unary-closed \(D\)-subalgebra of \(K[U,W]\) containing \(D[U,W]\):
it is unary-closed by \cref{prop:WPC}, and every other such subalgebra
contains each \(A_r\), by induction on \(r\), and hence contains \(A\).
Thus, in the notation of the introduction,
\[
 A=\Int_w(D^2).
\]
\end{remark}

We now isolate an invariant of the construction.  For
\(G\in K[U,W]\), write \(G_U,G_W,G_{UW}\) for its formal partial
derivatives.  Define
\begin{equation}
 E=\left\{
 G\in C:
 G_U(0,0),\ G_W(0,0),\ G_{UW}(0,0)\in V
 \right\}.
 \label{eq:E}
\end{equation}
Notice that every \(G\in C\) satisfies \(G(0,0)\in D\), since
\((0,0)\in D^2\).

\begin{proposition}\label{prop:invariant}
The set \(E\) is a \(D\)-subalgebra of \(C\), it contains
\(D[U,W]\), and it is closed under every unary operation
\[
 G\longmapsto f(G),\qquad f\in\Int(D).
\]
Consequently,
\[
 A\subseteq E.
\]
\end{proposition}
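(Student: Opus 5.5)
We verify the three closure properties in turn and then deduce \(A\subseteq E\) by induction on \(r\) in \eqref{eq:Ar}. Since \(C\) is a \(D\)-subalgebra of \(K[U,W]\), it suffices to check the three derivative conditions at \((0,0)\).

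\textbf{Algebra closure.} Closure under sums and under multiplication by elements of \(D\subseteq V\) is immediate, because differentiation is \(K\)-linear. For products \(GH\) we use the Leibniz rule:
\[
 (GH)_U=G_UH+GH_U,\qquad
 (GH)_{UW}=G_{UW}H+G_UH_W+G_WH_U+GH_{UW}.
\]
At \((0,0)\) every factor lies in \(V\). Here \(G(0,0),H(0,0)\in D\subseteq V\) because \(G,H\in C\), and the derivative values lie in \(V\) by hypothesis. Hence \(GH\in E\).

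\textbf{Containment of \(D[U,W]\).} A polynomial with coefficients in \(D\) has all partial derivatives in \(D[U,W]\), so its derivative values at \((0,0)\) lie in \(D\subseteq V\). It also lies in \(C\), as noted before \eqref{eq:Ar}. Thus \(D[U,W]\subseteq E\).

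\textbf{Unary closure.} This is the key step. Let \(f\in\Int(D)\), let \(G\in E\), and put \(a=G(0,0)\in D\). As shown before \eqref{eq:Ar}, \(f(G)\in C\). The chain rule gives
\[
 f(G)_U=f'(G)\,G_U,\qquad
 f(G)_{UW}=f''(G)\,G_UG_W+f'(G)\,G_{UW}.
\]
Evaluating at \((0,0)\), \cref{lem:derivative} gives \(f'(a)\in V\), so the first-order terms and the term \(f'(a)G_{UW}(0,0)\) all lie in \(V\).

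The only possible obstacle is the term \(f''(a)G_U(0,0)G_W(0,0)\), since a priori nothing controls \(f''(a)\). Characteristic \(2\) resolves this. For any \(f\in K[Z]\) with \(\operatorname{char}K=2\), the formal second derivative is \(f''=\sum i(i-1)c_iZ^{i-2}\), and \(i(i-1)\) is always even. Hence \(f''=0\) identically, and so
\[
 f(G)_{UW}(0,0)=f'(a)\,G_{UW}(0,0)\in V.
\]
Therefore \(f(G)\in E\).

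\textbf{Conclusion.} By induction, \(A_0=D[U,W]\subseteq E\). If \(A_r\subseteq E\), then \(A_{r+1}\) is generated as a \(D\)-algebra by elements of \(A_r\) and by elements \(f(G)\) with \(G\in A_r\). All of these generators lie in \(E\), and \(E\) is a \(D\)-subalgebra, so \(A_{r+1}\subseteq E\). Taking the union over \(r\) gives \(A\subseteq E\).
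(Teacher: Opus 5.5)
Your proof is correct and follows the paper's argument essentially step for step. You use the Leibniz rule for products, read the derivatives of elements of \(D[U,W]\) off their coefficients, combine the chain rule (with \(f''=0\) in characteristic \(2\)) with \cref{lem:derivative} for unary closure, and conclude \(A\subseteq E\) by induction on \(r\).
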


\begin{proof}
Closure under addition and multiplication by elements of \(D\) follows
directly from linearity of formal differentiation and \(D\subseteq V\).
Let \(G,H\in E\).  The product rules give
\[
 (GH)_U=G_UH+GH_U,\qquad
 (GH)_W=G_WH+GH_W
\]
and
\begin{equation}
 (GH)_{UW}
  =G_{UW}H+G_UH_W+G_WH_U+GH_{UW}.
 \label{eq:product-mixed}
\end{equation}
At the origin, \(G(0,0)\) and \(H(0,0)\) lie in \(D\subseteq V\),
while all derivatives occurring on the right sides lie in \(V\) by
the definition of \(E\).  Every displayed derivative of \(GH\) at the
origin therefore belongs to \(V\).  This proves closure under
multiplication.  Closure under additive inverses also follows from
linearity of differentiation.

For a polynomial
\(G=\sum_{i,j}a_{ij}U^iW^j\in D[U,W]\), the three derivatives at the
origin are \(a_{10}\), \(a_{01}\), and \(a_{11}\), respectively (with
a missing coefficient interpreted as \(0\)).  In particular, they
belong to \(D\subseteq V\).
Thus
\[
 D[U,W]\subseteq E.
\]
In particular, \(E\) contains \(D\); together with the preceding
closure properties, this proves that \(E\) is a \(D\)-subalgebra of
\(C\).

It remains to prove unary closure.  Take \(G\in E\) and
\(f\in\Int(D)\), and put \(a=G(0,0)\in D\).  The formal chain rule
gives
\begin{equation}
 \begin{aligned}
 (f(G))_U&=f'(G)G_U,\\
 (f(G))_W&=f'(G)G_W,\\
 (f(G))_{UW}
   &=f''(G)G_UG_W+f'(G)G_{UW}.
 \end{aligned}
 \label{eq:chain}
\end{equation}
The base field has characteristic \(2\).  If
\(f(Z)=\sum_j c_jZ^j\), then
\[
 f''(Z)=\sum_{j\geq2}j(j-1)c_jZ^{j-2}=0,
\]
because every integer \(j(j-1)\) is even.  Evaluating
\eqref{eq:chain} at the origin therefore yields
\[
 \begin{aligned}
 (f(G))_U(0,0)&=f'(a)G_U(0,0),\\
 (f(G))_W(0,0)&=f'(a)G_W(0,0),\\
 (f(G))_{UW}(0,0)&=f'(a)G_{UW}(0,0).
 \end{aligned}
\]
By \cref{lem:derivative}, \(f'(a)\in V\); by hypothesis, the other
factors lie in \(V\).  All three products lie in \(V\), and so
\(f(G)\) satisfies the derivative conditions in \eqref{eq:E}.
Moreover \(G\in C\) implies \(f(G)\in C\), as observed immediately
after \eqref{eq:Ar}.  Hence \(f(G)\in E\).

Since \(A_0=D[U,W]\subseteq E\), the unary closure just proved and the
fact that \(E\) is a \(D\)-algebra imply successively that
\(A_r\subseteq E\) for every \(r\).  Taking the union gives
\(A\subseteq E\).
\end{proof}

\section{The separating polynomial}
\label{sec:separation}

Put
\[
 q(Z)=Z^2+Z\in D[Z].
\]
Its two relevant features are that \(q(D)\subseteq\pi V\) and that
\(q'(Z)=1\).

\begin{lemma}\label{lem:q}
For every \(x\in D\), one has \(q(x)\in\pi V\).
\end{lemma}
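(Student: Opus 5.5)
The plan is to use the structure of \(D\) from \cref{lem:D-structure}: every \(x\in D\) has residue modulo \(\m=\pi V\) equal to \(0\) or \(1\). Accordingly, I will write \(x=\varepsilon+y\) with \(\varepsilon\in\{0,1\}\subseteq\F_2\) and \(y\in\pi V\). This decomposition follows directly from the definition \(D=\F_2+\pi V\).

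Next I will expand in characteristic \(2\):
\[
 q(x)=(\varepsilon+y)^2+(\varepsilon+y)=\varepsilon^2+\varepsilon+y^2+y,
\]
where the cross term \(2\varepsilon y\) vanishes because \(\operatorname{char}K=2\). Since \(\varepsilon\in\F_2\), we have \(\varepsilon^2+\varepsilon=0\). The remaining term \(y^2+y=y(y+1)\) lies in \(\pi V\), because \(y\in\pi V\) and \(y+1\in V\).

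An alternative, characteristic-free way to say the same thing is that \(q(x)=x(x+1)\), and one of the two factors always lies in the maximal ideal \(\m=\pi V\): this is \(x\) itself when the residue of \(x\) is \(0\), and \(x+1\) when the residue is \(1\). The other factor lies in \(D\subseteq V\), so the product lies in \(\pi V\).

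None of these steps presents a real obstacle. The only point needing care is to use that the residue of \(x\) lies in the prime field \(\F_2\), which is exactly the defining property of \(D\), rather than in the larger field \(L\).
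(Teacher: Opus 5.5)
Your argument is correct and is essentially the paper's proof: write \(x=\varepsilon+y\) with \(\varepsilon\in\F_2\) and \(y\in\pi V\), expand \(q(x)\) in characteristic \(2\), and use \(\varepsilon^2+\varepsilon=0\) together with \(y^2+y\in\pi V\) (the paper writes \(y=\pi v\) and obtains \(\pi^2v^2+\pi v\)). Your factorization variant \(q(x)=x(x+1)\) is also valid, but it is not quite characteristic-free: it uses that \(1+1\in\pi V\), i.e.\ residue characteristic \(2\), which is already implicit in the definition of \(D\).
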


\begin{proof}
Write \(x=\varepsilon+\pi v\), with
\(\varepsilon\in\F _2\) and \(v\in V\).  Since the characteristic is
\(2\),
\[
 \begin{aligned}
 q(x)
  &=(\varepsilon+\pi v)^2+(\varepsilon+\pi v)\\
  &=(\varepsilon^2+\varepsilon)+\pi^2v^2+\pi v
    =\pi^2v^2+\pi v\in\pi V.
 \end{aligned}
\]
Here \(\varepsilon^2+\varepsilon=0\) for
\(\varepsilon\in\F _2\).
\end{proof}

\begin{proposition}\label{prop:separation}
The polynomial
\begin{equation}
 P(U,W)=\frac{q(U)q(W)}{\pi}
       =\frac{(U^2+U)(W^2+W)}{\pi}
 \label{eq:P}
\end{equation}
belongs to \(\Int(D^2)\) but not to \(A\).
\end{proposition}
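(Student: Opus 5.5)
The argument has two independent parts: membership of \(P\) in \(\Int(D^2)\), and its exclusion from \(A\) via the invariant \(E\).

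\emph{Membership in \(\Int(D^2)\).} First note that \(P\in K[U,W]\), since \(\pi\ne0\) and \(K=\Frac(D)\) by \cref{lem:D-structure}. Take \((x,y)\in D^2\). By \cref{lem:q}, \(q(x)=\pi v\) and \(q(y)=\pi w\) for some \(v,w\in V\). Therefore
\[
 P(x,y)=\frac{\pi v\cdot\pi w}{\pi}=\pi vw\in\pi V\subseteq D,
\]
and so \(P(D^2)\subseteq D\), that is, \(P\in C=\Int(D^2)\).

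\emph{Non-membership in \(A\).} By \cref{prop:invariant} we have \(A\subseteq E\), so it suffices to show \(P\notin E\). We check the mixed derivative at the origin. In characteristic \(2\) we have \(q'(Z)=2Z+1=1\), and hence
\[
 P_{UW}=\frac{q'(U)q'(W)}{\pi}=\frac1\pi .
\]
Thus \(P_{UW}(0,0)=\pi^{-1}\). This is not in \(V\), because \(\pi\) is a uniformizer and so \(v(\pi^{-1})=-1<0\). Consequently \(P\notin E\), and therefore \(P\notin A\).

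Neither step is a real obstacle once \cref{prop:invariant} and \cref{lem:q} are available. The only point needing care is that the derivative computation uses characteristic \(2\), so that \(q'=1\) identically; this makes the mixed derivative equal to exactly \(1/\pi\) at every point, including the origin. Unlike the two first-order derivatives, it carries no factor that could vanish at \(0\). For reference, \(P_U(0,0)=q'(0)q(0)/\pi=0\), so \(P_U\) and \(P_W\) do vanish at the origin, which is why the mixed derivative is the quantity that detects \(P\).
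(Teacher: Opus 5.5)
Your proposal is correct and follows the paper's proof essentially verbatim. Membership in \(\Int(D^2)\) comes from \cref{lem:q}, which gives \(P(x,y)=\pi vw\in\pi V\subseteq D\). Exclusion from \(A\) comes from \(P_{UW}(0,0)=1/\pi\notin V\) together with \(A\subseteq E\) from \cref{prop:invariant}.
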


\begin{proof}
Let \(x,y\in D\).  By \cref{lem:q}, there are \(v,w\in V\) such
that \(q(x)=\pi v\) and \(q(y)=\pi w\).  Therefore
\[
 P(x,y)=\frac{(\pi v)(\pi w)}{\pi}
       =\pi vw\in\pi V\subseteq D.
\]
This proves \(P\in\Int(D^2)=C\).

On the other hand, \(q'(Z)=2Z+1=1\).  Differentiating
\eqref{eq:P} once in each variable gives
\[
 P_{UW}(U,W)=\frac{q'(U)q'(W)}{\pi}=\frac1\pi.
\]
In particular, \(P_{UW}(0,0)=1/\pi\notin V\), because \(\pi\) is a
nonunit of the valuation domain \(V\).  Thus \(P\notin E\).  Since
\(A\subseteq E\) by \cref{prop:invariant}, it follows that
\(P\notin A\).
\end{proof}

\begin{proof}[Proof of \cref{thm:main}]
Construct \(A\) by \eqref{eq:Ar}--\eqref{eq:A}.  By
\cref{prop:WPC}, it is a domain containing \(D\) and
\[
 \Int(D)\subseteq\Int(A).
\]

Fix \(n\geq2\), and in \(K[Z_1,\ldots,Z_n]\) consider
\begin{equation}
 \Phi_n(Z_1,\ldots,Z_n)
   =\frac{q(Z_1)q(Z_2)}{\pi}.
 \label{eq:Phi}
\end{equation}
The polynomial ignores its last \(n-2\) arguments.  The calculation in
the first paragraph of the proof of \cref{prop:separation} shows that
\(\Phi_n(D^n)\subseteq D\), and hence
\(\Phi_n\in\Int(D^n)\).

Suppose, for contradiction, that
\(\Phi_n\in\Int(A^n)\).  Since \(U,W,0\in A\), evaluation at the
\(n\)-tuple
\[
 (U,W,0,\ldots,0)\in A^n
\]
would give
\[
 \Phi_n(U,W,0,\ldots,0)
    =\frac{q(U)q(W)}{\pi}=P(U,W)\in A.
\]
This contradicts \cref{prop:separation}.  Therefore
\(\Phi_n\notin\Int(A^n)\).  We have proved
\[
 \Int(D^n)\nsubseteq\Int(A^n)
 \qquad\text{for every }n\geq2.
\]

For the explicit instance, take
\[
 V=\F _2(s)[t]_{(t)}.
\]
This is a discrete valuation domain of characteristic \(2\), with
uniformizer \(t\), quotient field \(\F _2(s)(t)\), and infinite residue
field \(\F _2(s)\).  All the hypotheses are therefore satisfied.
\end{proof}

\begin{corollary}\label{cor:hierarchy}
For extensions of domains, both implications in the hierarchy
\[
 \mathrm{PC}\Longrightarrow\mathrm{APC}
 \Longrightarrow\mathrm{WPC}
\]
are strict.
\end{corollary}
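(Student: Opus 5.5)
We need two things: that PC implies APC and APC implies WPC, and that neither converse holds. Strictness of the first is already recorded in \cref{tab:hierarchy}. The extension \(\mathbb Z[T]\subseteq\mathbb Z[T/2]\) is APC but not PC, and we take this from \cite[p.~231]{Elliott2009}. Strictness of the second is exactly \cref{thm:main}.

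We would begin by recalling that both implications hold. APC implies WPC by taking \(n=1\) in the definition. PC implies APC follows from the successive one-variable specialization argument given in the introduction.

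For PC implies APC, fix \(F\in\Int(R^n)\) and argue by induction on \(k\). Assume \(F\) takes values in \(B\) when its first \(k-1\) variables range over \(B\) and its remaining variables range over \(R\). Fix all variables except the \(k\)-th. The resulting one-variable polynomial \(h\) lies in \(\Frac(B)[Z]\) and satisfies \(h(R)\subseteq B\). So \(h\in\Int(R,B)\), which equals \(\Int(B)\) by PC, and the \(k\)-th variable may also range over \(B\). After \(n\) steps we get \(F(B^n)\subseteq B\).

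For APC does not imply WPC's converse, fix \(V=\F_2(s)[t]_{(t)}\) and \(D=\F_2+tV\), as in the explicit instance of \cref{thm:main}. Let \(A=\Int_w(D^2)\). By \cref{prop:WPC}, \(D\subseteq A\) is an extension of domains with \(\Int(D)\subseteq\Int(A)\), so it is WPC. The proof of \cref{thm:main} shows \(\Int(D^2)\nsubseteq\Int(A^2)\), so the extension is not APC. Hence WPC does not imply APC.

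For APC does not imply PC, no new argument is needed; we rely on the cited example. If one wanted a self-contained check of the non-PC part, one would exhibit a suitable polynomial in \(\Int(\mathbb Z[T],\mathbb Z[T/2])\setminus\Int(\mathbb Z[T/2])\). We would rely on the citation rather than re-derive the APC part, which is the only genuinely nontrivial step here.

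The main point of care is bookkeeping: each counterexample must be an extension of domains in the sense of the definitions, with inclusions taken through the quotient fields. For our example this holds because \(\Frac(D)=K\subseteq K(U,W)=\Frac(A)\) by \cref{lem:D-structure} and \cref{prop:WPC}. With both converses failing, both implications are strict, which proves the corollary.
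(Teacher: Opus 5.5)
Your proposal is correct and follows the paper's own proof. The forward implications come from the introduction: APC $\Rightarrow$ WPC is the case $n=1$, and PC $\Rightarrow$ APC is the successive one-variable specialization. The first converse fails by Elliott's example $\mathbb Z[T]\subseteq\mathbb Z[T/2]$, and the second fails by the extension $D\subseteq A=\Int_w(D^2)$ of \cref{thm:main}. The only blemish is wording: your heading ``APC does not imply WPC's converse'' should read ``WPC does not imply APC.''
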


\begin{proof}
The implications were proved in the introduction.  Elliott's extension
\(\mathbb Z[T]\subseteq\mathbb Z[T/2]\) is APC but not PC
\cite[p.~231]{Elliott2009}, so the first implication cannot be
reversed.  The extension \(D\subseteq A\) in \cref{thm:main} is WPC
but not APC, so the second implication cannot be reversed.
\end{proof}

\begin{remark}[Scope of the example]\label{rem:scope}
The domains \(D\) occurring in \cref{thm:main} are non-Noetherian.
Indeed, \(\m=\pi V\) and
\[
 \m^2=\pi^2V.
\]
The inclusion \(\m^2\subseteq\pi^2V\) follows by multiplying elements
of \(\pi V\); conversely, for \(v\in V\), the equality
\(\pi^2v=\pi(\pi v)\) writes every element of \(\pi^2V\) as a product
of two elements of \(\m\).

Now consider the map
\[
 V\longrightarrow\m/\m^2,\qquad
 v\longmapsto\pi v+\pi^2V.
\]
It is surjective because every element of \(\m\) has the form
\(\pi v\).  Its kernel consists exactly of the \(v\in V\) such that
\(\pi v\in\pi^2V\), namely the elements of \(\pi V\).  It therefore
induces an isomorphism
\[
 L=V/\pi V\xrightarrow{\ \sim\ }\m/\m^2
\]
of vector spaces over
\(D/\m\cong\F _2\).  Since \(L\) is an infinite field containing
the finite field \(\F _2\), it is infinite-dimensional over
\(\F _2\).  Hence \(\m/\m^2\) is infinite-dimensional; if \(\m\)
were finitely generated as a \(D\)-ideal, this quotient would be
finite-dimensional.  Thus \(D\) is not Noetherian.  The construction
settles the unrestricted implication, but it does not decide what may
happen after imposing Noetherianity or other finiteness hypotheses.
\end{remark}

\renewcommand{\sectionname}{}
\section*{Acknowledgments}
We used GPT-5.6 Sol and an agentic harness built around GPT-5.6 Sol and Claude Fable 5 to assist with literature searches, hypothesis testing, the exploration and elimination of potential approaches, wording refinement, and manuscript proofreading. We thank Hieu M. Vu, Tho Tran Huu, Khoi M. N. Nguyen, Dung V. Nguyen, and Quang X. Nguyen for their assistance with hardware-related matters and for providing technical support in the use of the AI tools and agentic system. Viet-Hoang Tran thanks Thieu N. Vo for insightful discussions regarding the problem.

\bibliographystyle{plain}
\begingroup
\footnotesize
\raggedright
\bibliography{references}
\endgroup

\end{document}